\def\QD{\smash{\widehat Q}}
\def\QDh{\widehat Q}
\def\Cyc{C}
\def\C{{J}}
\def\R{{R}}
\def\step#1#2{\par\vskip 2pt\noindent(#1) \emph{#2}\par\vskip 2pt}
\begin{document}

\title{Character formula for conjugacy classes in a coset}

\author{Tim Dokchitser}
\address{Department of Mathematics, University Walk, Bristol BS8 1TW, UK}
\email{tim.dokchitser@bristol.ac.uk} 

\author{Vladimir Dokchitser}
\address{University College London, London WC1H 0AY, UK}
\email{v.dokchitser@ucl.ac.uk}

\subjclass[2020]{20C15, 20E45 (Primary); 20C25, 20K35 (Secondary)}

\begin{abstract}
Let $G$ be a finite group and $N\normal G$ a normal subgroup with $G/N$ abelian.
We show how the conjugacy classes of $G$ in a given coset $qN$ relate to the irreducible characters of $G$ that are not identically 0 on $qN$. 
We describe several consequences. In particular, we deduce that when $G/N$ is cyclic generated by $q$, the number of irreducible characters of $N$ that extend to $G$ is the number of conjugacy classes of $G$ in $qN$.
\end{abstract}

\maketitle

Let $G$ be a finite group and $N\normal G$ a normal subgroup with $Q=G/N$ abelian.
The character group $\QD$ acts on the set of irreducible characters $\Irr G$ by tensoring, and it is well known
that (see e.g. \cite[Thm 1.3]{Tappe})

\begin{center}
  \#(conjugacy classes of $G$ inside $N$) = \#($\QD$-orbits on $\Irr G$).
\end{center}

\noindent
In this note, we give a simple representation-theoretic interpretation of conjugacy classes in 
other cosets of $N$, and discuss some corollaries.
We write $[g]$ for the conjugacy class of $g\in G$, and $[\rho]$ for the $\QD$-orbit
of $\rho\in\Irr G$.

\begin{theorem}
\label{main}
Let $N\normal G$ be finite groups with $Q=G/N$ abelian,
and $q\in G$. Consider

\begin{tabular}{llllll}
     $\C_q$ &=& set of conjugacy classes of $G$ inside $qN$,\Tcr
     $\R_q$ &=& set of $\QD$-orbits $[\rho]$ on $\Irr G$ with $\rho$ not identically 0 on $qN$.\cr
\end{tabular}

\noindent
Then $\# \C_q=\# \R_q$, and the following matrix is unitary:
$$
  M_q=\Bigl(\sqrt{\tfrac{\#[g]\#[\rho]}{\#G}}\,\rho(g)\Bigr)_{[\rho]\in \R_q,\,[g]\in \C_q}.
$$ 
Here we pick any representative $\rho$ for each orbit in $\R_q$.
\end{theorem}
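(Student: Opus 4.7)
The plan is to prove row and column orthonormality of $M_q$ separately. Since $M_qM_q^*=I$ and $M_q^*M_q=I$ together force $M_q$ to be square, this yields both $\#\C_q=\#\R_q$ and the unitarity assertion in one shot. Two preliminary remarks underlie the calculation. Because $Q$ is abelian, conjugation in $G$ preserves each coset $qN$, so $\C_q$ is well-defined. For any $\chi\in\QD$ and $g\in qN$ one has $(\rho\otimes\chi)(g)=\chi(qN)\,\rho(g)$, so the property ``$\rho$ vanishes on $qN$'' depends only on $[\rho]$, and $|\rho(g)|$ is constant along the orbit at each point.

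For row orthonormality I would fix $\rho,\rho'\in\Irr G$ and study $f(qN):=\sum_{g\in qN}\rho(g)\overline{\rho'(g)}$. The standard identity $\sum_{g\in G}\rho(g)\overline{(\rho'\otimes\chi)(g)}=\#G\cdot\delta_{\rho,\,\rho'\otimes\chi}$ rewrites as $\sum_{qN\in Q}\overline{\chi(qN)}\,f(qN)=\#G\cdot\delta_{\rho,\,\rho'\otimes\chi}$ for each $\chi\in\QD$, and Fourier inversion on the abelian group $Q$ then recovers $f$. If $[\rho]\ne[\rho']$ every right-hand side vanishes, so $f\equiv 0$; if $[\rho]=[\rho']$, taking $\rho=\rho'$ the inversion produces
$$f(qN)\;=\;\frac{\#G}{\#Q}\sum_{\chi\in\Stab(\rho)}\chi(qN),$$
a character sum on the subgroup $\Stab(\rho)\subset\QD$. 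This evaluates to $\#\Stab(\rho)=\#Q/\#[\rho]$ when $\chi(qN)=1$ for every $\chi\in\Stab(\rho)$, and to $0$ otherwise. I would then check that the nonzero case is exactly $[\rho]\in\R_q$: if $\rho$ does not vanish on $qN$ then $\rho=\chi(qN)\rho$ on $qN$ forces $\chi(qN)=1$ for every $\chi\in\Stab(\rho)$, and the converse is the formula itself. Substituting into $(M_qM_q^*)_{[\rho],[\rho']}=\frac{\sqrt{\#[\rho]\#[\rho']}}{\#G}\,f(qN)$ gives the identity matrix.

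Column orthonormality follows essentially immediately from the column orthogonality of the character table of $G$: for $g,g'\in qN$,
$$\sum_{\rho\in\Irr G}\rho(g)\overline{\rho(g')}\;=\;\#C_G(g)\,\delta_{[g],[g']}.$$
Grouping the sum by $\QD$-orbits and using $\chi(g)=\chi(g')$ whenever $g,g'$ lie in the same coset, each orbit contributes $\#[\rho]\,\rho(g)\overline{\rho(g')}$, while orbits outside $\R_q$ contribute zero. Combined with $\#C_G(g)=\#G/\#[g]$, this yields $(M_q^*M_q)_{[g],[g']}=\delta_{[g],[g']}$ directly.

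The one step with any genuine content is the Fourier inversion in the row calculation, where one must identify the support of $f$ on $Q$ with the indexing set $\R_q$; once $\#\Stab(\rho)\cdot\#[\rho]=\#Q$ is noted, this reduces to a standard character sum on the subgroup $\Stab(\rho)\subset\QD$.
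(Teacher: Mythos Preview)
Your proposal is correct. The core computation---identifying $[\rho]\in R_q$ with the condition that $qN$ lies in $\bigcap_{\chi\in\Stab(\rho)}\ker\chi$, via the character sum $\sum_{\chi\in\Stab(\rho)}\chi(qN)$---is the same as in the paper (steps (ii)--(iii) there), just phrased as Fourier inversion on $Q$ rather than through the projector $\pi_q=\frac{1}{\#\QD}\sum_\chi\overline{\chi(q)}\chi$.

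The structural difference is in how squareness is obtained. The paper builds the functions $\sqrt{\#[\rho]}\,(\pi_q\otimes\rho)$ into an orthonormal basis of \emph{all} class functions on $G$, then observes that those supported on $qN$ are exactly the ones indexed by $R_q$; a dimension count gives $\#C_q=\#R_q$, after which row orthonormality alone finishes. You instead prove $M_qM_q^*=I$ and $M_q^*M_q=I$ independently and let linear algebra force the matrix to be square. Your column argument---second orthogonality for $G$, grouped by $\QD$-orbits using $\chi(g)=\chi(g')$ for $g,g'\in qN$---is clean and bypasses the basis construction entirely. The paper's route, on the other hand, exhibits an explicit orthonormal basis for class functions supported on a single coset, which is a slightly stronger structural statement and is reused implicitly in the later inversion formula.
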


Through the article, a character $\chi$ of a group $G$ containing a normal subgroup $N\normal G$ in
its kernel is sometimes seen as a character of $G/N$ and vice versa. (See e.g. \cite[Lemma 2.22]{Isaacs} 
and the discussion after that.) When $Q$ is abelian, recall that $\hat Q$ is a group under~$\tensor$, and
$\hat Q\iso Q$ non-canonically (see \cite[Problem 2.7]{Isaacs}).

\begin{proof}[Proof of Theorem \ref{main}]
Consider the class functions 
\smash{$
  \pi_q = \tfrac 1{\#\QDh} \sum_{\chi\in\QDh} \overline{\chi(q)} \chi
$}
in the character ring of $G$.
We prove the theorem in 6 steps:

\step{i}{Claim. 
$\pi_q(g)=\leftchoice 0{\text{if $g\notin qN$}}1{\text{if $g\in qN$}}$, and hence
$(\pi_q\tensor\rho)(g)=\leftchoice 0{\text{if $g\notin qN$}}{\rho(g)}{\text{if $g\in qN$}}$.
}

\noindent
Indeed, 
$$
  \pi_q(g) = \tfrac 1{\#\QDh} \sum_{\chi\in\QDh} \overline{\chi(q)} \chi(gN) 
    =\leftchoice 0{\text{if $g\notin qN$}}1{\text{if $g\in qN$}},
$$
by column orthogonality in the character table for $Q$.

\step{ii}{Claim.
If $q\notin\cap_{\chi\in\Stab(\rho)}\ker\chi$, then $\pi_q\tensor\rho=0$.
Otherwise, $\langle \pi_q\tensor\rho,\pi_q\tensor\rho\rangle=\frac{1}{\#[\rho]}$.
}

\noindent
Let $S$ be a set of representatives for $\QDh/\Stab(\rho)$. Every $\chi\in\QDh$ can be written uniquely
as $\chi_1\chi_2$ with $\chi_1\in\Stab\rho$ and $\chi_2\in S$. Then
$$
  \pi_q\tensor\rho = \tfrac 1{\#\QDh} \sum_{\chi\in\QDh} \overline{\chi(q)} (\chi\tensor\rho) =
     \sum_{\chi_2\in S} (\chi_2\tensor\rho) \overline{\chi_2(q)} \sum_{\chi_1\in\Stab(\rho)} \overline{\chi_1(q)}.
$$
If $q\notin\cap_{\chi\in\Stab(\rho)}\ker\chi$, then the inner sum is 0 by column orthogonality for 
$q$ and the identity element in the character table of $\Stab(\rho)$.
Otherwise, it is $\#\Stab(\rho)$, so
$$
  \pi_q\tensor\rho = \tfrac{1}{\#[\rho]} \sum_{\chi_2\in S} \overline{\chi_2(q)} (\chi_2\tensor\rho).
  \eqno{(\dagger)}
$$
In that case, the characters $\chi_2\tensor\rho$ are all distinct, hence orthonormal, and
$$
  \langle \pi_q\tensor\rho, \pi_q\tensor\rho \rangle 
    = \tfrac{1}{(\#[\rho])^2} \sum_{\chi_2\in S} \chi_2(q) \overline{\chi_2(q)} 
    = \tfrac{1}{(\#[\rho])^2} \sum_{\chi_2\in S} 1
    = \tfrac{1}{\#[\rho]}.
$$

\step{iii}{Claim.
$[\rho]\in \R_q \>\iff\> q\in\cap_{\chi\in\Stab(\rho)}\ker\chi$.
}

\noindent
Suppose $[\rho]\in \R_q$, so $\rho\not\equiv 0$ on $qN$. If $\chi\in\Stab\rho$, then 
$\chi\tensor\rho=\rho$, and in particular $\chi(q)=1$ as $\chi$ is constant on $qN$.  
Therefore, $q\in\cap_{\chi\in\Stab(\rho)}\ker\chi$. 
Conversely, if $q$ lies in this intersection, then $\langle \pi_q\tensor\rho,\pi_q\tensor\rho\rangle\ne 0$ by (ii).
As $\pi_q\tensor\rho$ is zero outside $qN$ by (i), we must have $\rho\ne 0$ on $qN$. In other words $[\rho]\in \R_q$.

\step{iv}{Claim.
Choose a set of representatives $U$ of orbits of $\QD$ on $\Irr G$. Then
$$\bigl\{ \sqrt{\#[\rho]}(\pi_q\tensor\rho) \bigm | \rho\in U, q\in\cap_{\chi\in\Stab(\rho)}\ker\chi \bigr\}$$
is an orthonormal basis of class functions for $G$.
}

\noindent
From $(\dagger)$ it is clear that 
$\pi_q\tensor\rho$ and $\pi_{q'}\tensor\rho'$ are orthogonal whenever $\rho\ne\rho'$,
as $[\rho]$ and $[\rho']$ are disjoint.
From (i) it follows that they are orthogonal when $q\ne q'$ as well,
and (ii) shows orthonormality. 

Next, for abelian groups $B\<A$, we have 
$$
  \bigcap_{g\in B}\bigl\{\psi\in\hat A\bigm|\psi(g)=1\bigr\} = \bigl\{\psi\in \hat A \bigm| B\subset\ker\psi \bigr\} = 
  \widehat{A/B}.
$$
Applying this to $B=\Stab \rho$, $A=\smash{\hat Q}$ and $\smash{\hat A}=\smash{\hat{\hat Q}}=Q$ we find that
$$
  \bigcap_{\chi\in\Stab(\rho)}\ker\chi =
  \bigcap_{\chi\in \Stab\rho}\bigl\{q\in Q\bigm|\chi(q)=1\bigr\} 
   = \widehat{\hat Q/\Stab\rho}.
$$
In particular, for for each $\rho\in U$, the left-hand side is a group of order $\#Q/\#\Stab\rho=\#[\rho]$. 
Thus our set of class functions has cardinality 
$$
  \sum_{\rho\in U} \#[\rho]=\#\Irr G,
$$
and is therefore a basis.

\step{v}{Claim. 
$\# \C_q=\# \R_q$.
}

\noindent
The class functions $\sqrt{\#[\rho]}(\pi_q\tensor\rho)$ with $q\in\cap_{\chi\in\Stab(\rho)}\ker\chi$ 
are 0 outside $qN$ and are the only such functions from the basis in (iv).
So they are a basis of class functions that are zero outside $qN$, 
and hence their number is the number of conjugacy classes in $qN$.

\step{vi}{Claim. $M_q$ is unitary.}

\noindent
Choose a set of representatives $U_q$ of $\QD$-orbits in $\R_q$. 
By (i), for $\rho, \rho'\in U_q$, we have
$$
  \displaystyle
  \sum_{g\in \C_q}\sqrt{\tfrac{\#[g]\#[\rho]}{\#G}}\,\rho(g) \sqrt{\tfrac{\#[g]\#[\rho']}{\#G}}\,\overline{\rho'(g)} 
  =
  \displaystyle
  \frac{1}{\# G} \sum_{g\in \C_q} \#[g] \sqrt{\#[\rho]}(\pi_q\tensor\rho)(g) \sqrt{\#[\rho']}\, \overline{(\pi_q\tensor\rho')(g)}.
$$
By (i), $\pi_q\tensor\rho$ is 0 outside $qN$, so this is just 
$$
  \langle \sqrt{\#[\rho]}\pi_q\tensor\rho, \sqrt{\#[\rho']}\pi_q\tensor\rho'\rangle,
$$
which is 1 if $\rho=\rho'$ and 0 otherwise, by (iv).
So the rows of $M_q$ are orthonormal. As $M_q$ is a square matrix by (v), it is unitary.
\end{proof}


\begin{example}
Consider $G=F_5=\Cyc_5\rtimes \Cyc_{4}$ of order 20, and $N=\Cyc_5\normal G$. 
Pick $h\in N$ and $q\in G$ of order 5 and 4, respectively.
The character table of $G$ is given on the right.

\noindent
\begin{minipage}{0.65\textwidth}
\ \ \
For the trivial coset $N$ we have 
$$ 
  \C_N=\{[1],[h]\}, \qquad  \R_N=\{[\rho_4],[\rho_5]\}.
$$
Thus, $\#\C_N=\#\R_N$, and indeed 
$$
M_N=\begin{pmatrix} 
  \sqrt{\frac{4}{20}}\cdot 1 & \sqrt{\frac{4\cdot 4}{20}}\cdot 1\cr 
  \sqrt{\frac{1}{20}}\cdot 4 & \sqrt{\frac{4}{20}}\cdot(-1)\cr 
\end{pmatrix}=\frac 1{\sqrt{5}}\begin{pmatrix}  1 & 2 \cr 2 & -1 \end{pmatrix}
$$
is unitary. All other cosets have
$$ 
  \C_{q^jN}=\{[q^j]\}, \quad  \R_{q^jN}=\{[\rho_1]\} \quad\text{and}\quad M_{q^jN}=(1).
$$
\end{minipage}%
\hfill
\begin{minipage}{0.34\textwidth}\hfill
\begin{tikzpicture}
\matrix (M) [matrix of math nodes,row sep=-0.15em, column sep=0em]
{          & \>1 & h & q & q^2 & q^3 \\
  \rm order&\>\rm1&\rm5&\rm4&\rm2&\rm4\\
  \rm size&\>1&4&5&5&5\\
\hline
  \vphantom{\int}
  \rho_{1}&\>1&1&1&1&1\\
  \rho_{2}&\>1&1&-1&1&-1\\
  \rho_{3}&\>1&1&-i&-1&i\\
  \rho_{4}&\>1&1&i&-1&-i\\
  \rho_{5}&\>4&-1&0&0&0\\
};
\draw[black] (M-1-2.north west -| M-8-2.south west) -- (M-8-2.south -| M-8-2.south west);
\draw[black,dashed] (M-7-3.north east -| M-8-3.north east) -- (M-8-3.south east |- M-8-2.south) -- 
     ({$(M-4-1)!.77!(M-4-2)$} |- M-8-2.south) -- ({$(M-4-1)!.77!(M-4-2)$} |- M-7-2.north) -- (M-7-3.north east);
\draw[black,dashed] (M-4-4.north west) -- (M-4-4.north east) -- (M-4-4.south east) -- (M-4-4.south west) -- (M-4-4.north west);
\draw[black,dashed] (M-4-5.north west) -- (M-4-5.north east) -- (M-4-5.south east) -- (M-4-5.south west) -- (M-4-5.north west);
\draw[black,dashed] (M-4-6.north west) -- (M-4-6.north east) -- (M-4-6.south east) -- (M-4-6.south west) -- (M-4-6.north west);
\end{tikzpicture}
\end{minipage}%
\end{example}

\begin{remark}
Generally, Clifford-Fischer theory links cosets of $N$, even when $G/N$ is not abelian, 
to certain projective representations of $G$; see \cite{Fis} and the references in \cite{BM}.
It is quite possible that Theorem \ref{main} is a special case, although usually in Clifford-Fischer
theory to deduce results for ordinary (rather than projective) characters, one requires either that $G=N\rtimes Q$ 
or that every character of $N$ extends to its inertia group. 
Theorem \ref{main} does not need those assumptions (which fail e.g. when $N=\Cyc_2$, $G=Q_8$), and it is probably
easier to prove from first principles in any case.
\end{remark}

\begin{remark}
When $N=G$, the unitary matrix \smash{$M_G=\bigl(\sqrt\frac{\#[g]}{\# G}\rho(g)\bigr)$} is the usual modified character table.
\end{remark}

\begin{lemma}
\label{lemmain}
In the setting of Theorem \ref{main},
\noindent
\begin{enumerate}
\item 
$
  \#\Stab_{\QD}(\rho) = \langle \Res_N\rho,\Res_N\rho \rangle.
$
\item
$[\rho]\in \R_q \>\iff\> q\in\cap_{\chi\in\Stab(\rho)}\ker\chi$.

\item
$\R_q\subset \R_{q^k}$ and $\# \C_q\le \#\C_{q^k}$ for all $q\in G$ and $k\ge 1$.

\item
If $G/N$ is cyclic generated by $q\in G$, then 
$$
  [\rho]\in \R_q \>\>\liff\>\> \Stab_{\QD}(\rho)=\{\triv\} \>\>\liff\>\> \Res_N\rho\text{ is irreducible}.
$$
\end{enumerate}
\end{lemma}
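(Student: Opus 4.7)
The four parts are largely consequences of Theorem \ref{main} together with a short Clifford-theoretic computation, so the plan is to dispose of them in a convenient order.

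Part (2) is literally step (iii) of the proof of Theorem \ref{main}, so there is nothing more to do.

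For part (1), the natural approach is Frobenius reciprocity combined with the projection formula. Since $\Ind_N^G \triv_N = \sum_{\chi\in\QDh}\chi$ as characters of $G$, the projection formula gives
$$
  \Ind_N^G \Res_N \rho \;=\; \rho\tensor \Ind_N^G \triv_N \;=\; \sum_{\chi\in\QDh} \chi\tensor\rho.
$$
Taking the inner product with $\rho$ on $G$ and applying Frobenius reciprocity,
$$
  \langle \Res_N\rho,\Res_N\rho\rangle_N
  \;=\; \langle \rho, \Ind_N^G\Res_N\rho\rangle_G
  \;=\; \sum_{\chi\in\QDh} \langle \rho,\chi\tensor\rho\rangle_G
  \;=\; \#\Stab_{\QDh}(\rho),
$$
where the last equality is because $\langle\rho,\chi\tensor\rho\rangle_G\in\{0,1\}$, equal to $1$ precisely when $\chi\in\Stab(\rho)$.

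Part (3) is immediate from (2) and the theorem: if $q\in\bigcap_{\chi\in\Stab(\rho)}\ker\chi$, a subgroup of $\QDh$, then $q^k$ lies there too, so $\R_q\subset\R_{q^k}$; the cardinality inequality $\#\C_q=\#\R_q\le\#\R_{q^k}=\#\C_{q^k}$ then follows from Theorem \ref{main}.

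Finally, for part (4), assume $G/N$ is cyclic with generator $q$. Then $\cap_{\chi\in\Stab(\rho)}\ker\chi$ contains $q$ iff it contains all of $Q$ iff $\Stab(\rho)=\{\triv\}$, giving the first equivalence via (2). The second equivalence is the statement that $\Res_N\rho$ is irreducible exactly when $\langle\Res_N\rho,\Res_N\rho\rangle=1$, which by (1) is equivalent to $\#\Stab(\rho)=1$. There is no real obstacle here; the only piece that is not formal bookkeeping is the identity $\Ind_N^G \triv_N=\sum_{\chi\in\QDh}\chi$ used in (1), which is standard for $G/N$ abelian.
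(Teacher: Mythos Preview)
Your proof is correct and follows essentially the same route as the paper: part (2) is referred back to step (iii), part (1) is Frobenius reciprocity together with the projection formula and the identity $\Ind_N^G\triv_N=\bigoplus_{\chi\in\QDh}\chi$, and parts (3) and (4) are the same short deductions from (1), (2), and Theorem \ref{main}. One small slip to fix: in (3) the intersection $\bigcap_{\chi\in\Stab(\rho)}\ker\chi$ is a subgroup of $Q$ (equivalently, its preimage is a subgroup of $G$), not of $\QDh$.
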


\begin{proof}
(1) Since $Q$ is abelian, $\sum_{\psi\in\QD} \psi$ is the character of the regular representation of~$Q$, in other words 
$\Ind_N^G\triv$ as a character of $G$.
Recall that $\rho\tensor\Ind_N^G\triv=\Ind_N^G((\Res_N\rho)\tensor\triv)$, see e.g. \cite[Problem 5.3]{Isaacs}.
So, by Frobenius reciprocity,
$$
  \#\Stab_{\QD}(\rho) \!=\!
  \langle \rho, \rho\tensor\sum_{\psi\in\QD} \psi \rangle \!=\!
  \langle \rho, \rho\tensor\Ind_N^G\triv \rangle \!=\!
  \langle \rho, \Ind_N^G((\Res_N\rho)\tensor\triv)\rangle \!=\!
  \langle \Res_N\rho,\Res_N\rho \rangle.
$$
(2) This is claim (iii) in the proof of Theorem \ref{main}. 
(3) Immediate from (2) and the equality $\# \R_q=\# \C_q$.
(4) First equivalence follows from (2),
noting that $q\notin\ker\chi$ for any $\triv\ne \chi\in\QDh$. Second equivalence follows from (1). 
\end{proof}

\begin{corollary}
\label{cor1}
Let $N\normal G$ with $G/N$ cyclic generated by $q\in G$. The following are equal:
\begin{itemize}
\item 
The number of conjugacy classes of $G$ inside $qN$.
\item
The number of $\QDh$-orbits $[\rho]$ on $\Irr G$ with $\rho$ not identically 0 on $qN$.
\item
The number of $\QDh$-orbits on $\Irr G$ of length $(G:N)$.
\item
$\tfrac{1}{(G:N)}$ times the number of $\rho\in \Irr G$ whose restriction to $N$ is irreducible.
\item
The number of $\tau\in \Irr N$ that extend to a character of $G$.
\end{itemize}
\end{corollary}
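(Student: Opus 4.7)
My plan is to chain the five counts together: the first equality is supplied by Theorem \ref{main}, and each of the remaining three equalities follows from Lemma \ref{lemmain}(4) combined with elementary $\QD$-orbit bookkeeping. Write $n = (G:N) = \#\QD$ throughout.

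First, equality of counts (1) and (2) is Theorem \ref{main} applied to the coset $qN$. Next, Lemma \ref{lemmain}(4) gives $[\rho] \in \R_q \iff \Stab_{\QD}(\rho) = \{\triv\}$, which is the same as the $\QD$-orbit of $\rho$ having full length $n$; this identifies (2) with (3). For (3)=(4), each length-$n$ orbit contributes exactly $n$ characters $\rho$, and by Lemma \ref{lemmain}(4) these are precisely the $\rho$ with $\Res_N\rho$ irreducible, so dividing (4) by $n$ recovers (3).

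The only step that takes a small additional argument, and is the main technical point, is (4)=(5). My plan is to show that restriction $\rho \mapsto \Res_N\rho$ is an $n$-to-$1$ surjection from $\{\rho \in \Irr G : \Res_N\rho \text{ irreducible}\}$ onto $\{\tau \in \Irr N : \tau \text{ extends to } G\}$. Surjectivity is essentially the definition, together with the observation that any extension of an irreducible $\tau$ must itself be irreducible (it appears in $\Ind_N^G\tau$ with multiplicity one, by a dimension argument after decomposing into $\Irr G$). For the fibre count, I would argue that each twist $\chi\otimes\rho$ with $\chi\in\QD$ restricts to $\tau$, and that these $n$ twists are pairwise distinct by the trivial stabilizer condition of Lemma \ref{lemmain}(4); conversely, any extension of $\tau$ occurs as a constituent of $\Ind_N^G\tau = \rho\otimes\Ind_N^G\triv = \bigoplus_{\chi\in\QD}\chi\otimes\rho$ (by Frobenius reciprocity, any $\rho'$ with $\Res_N\rho'=\tau$ satisfies $\langle\rho',\Ind_N^G\tau\rangle=1$). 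So the fibre has exactly $n$ elements.

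I don't anticipate any real obstacle beyond this last paragraph, as all the substantive content has been packaged into Theorem \ref{main} and Lemma \ref{lemmain}. The only slightly delicate verification is that the $n$ twists $\chi\otimes\rho$ exhaust the extensions of $\tau$, which is routine once the trivial stabilizer condition is in hand.
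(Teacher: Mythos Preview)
Your proposal is correct and follows essentially the same route as the paper: Theorem~\ref{main} for the first equality, Lemma~\ref{lemmain}(4) for the next two, and Frobenius reciprocity together with $\Ind_N^G\tau = \rho\otimes\Ind_N^G\triv_N$ for the last. The paper's version of your final step is only cosmetically different: rather than separating surjectivity and fibre-counting, it directly shows that the set of $\rho'\in\Irr G$ with $\Res_N\rho'=\tau$ is exactly the orbit $[\rho]$, which is your argument compressed.
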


\begin{proof}
The first equivalence follows from Theorem \ref{main} ($\#\R_q=\#\C_q$). 
The second and third are the two equivalences in Lemma \ref{lemmain} (2). 
For the last one, observe that $\tau\in\Irr N$ extends to a character of $G$ if and only if $\tau=\Res\rho$ 
for some $\rho\in\Irr G$. Suppose $\tau=\Res\rho$. Then
$$
  \Res_N \rho' \!=\! \tau \>\Rightarrow\> 1 \!=\! \langle \Res_N \rho', \tau \rangle \!=\!
    \langle \rho', \Ind_N^G \Res\rho \rangle \!=\!
    \langle \rho', \rho\tensor\Ind_N^G\triv_N \rangle 
    \>\Rightarrow\> \rho'\in[\rho].
$$
Conversely, if $\rho'\in[\rho]$, then clearly $\tau=\Res\rho'$. In other words, the characters that restrict
to $\tau$ are exactly those in $[\rho]$, so there are $(G:N)$ of them. The last equivalence now follows.
\end{proof}

\begin{corollary}
Let $N\normal G$ with $G/N$ cyclic. Then $N$ has a non-trivial irreducible character that extends 
to $G$ if and only if $G$ has no conjugacy classes of size $\#N$.
\end{corollary}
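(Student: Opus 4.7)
The plan is to reduce the claim to Corollary \ref{cor1} and Lemma \ref{lemmain}(3) via a simple coset-counting observation. Pick a generator $q\in G$ of $G/N$. By Corollary \ref{cor1}, the number of $\tau\in\Irr N$ that extend to a character of $G$ equals $\#\C_q$. The trivial character of $N$ always extends (to the trivial character of $G$), so the existence of a nontrivial extending character is equivalent to $\#\C_q\ge 2$.

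Next I would translate the right-hand condition into a statement about cosets. Because $N\normal G$, conjugation in $G$ preserves the partition into cosets of $N$, so every conjugacy class of $G$ lies inside a unique coset $gN$ and has size at most $\#N$, with equality precisely when it fills out that coset. Hence ``$G$ has a conjugacy class of size $\#N$'' is equivalent to ``$\#\C_{q^k}=1$ for some $k$''.

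The remaining step is to combine these using Lemma \ref{lemmain}(3), which gives $\#\C_q\le\#\C_{q^k}$ for all $k\ge 1$. This collapses ``$\#\C_{q^k}=1$ for some $k\ge 1$'' to ``$\#\C_q=1$''; and the case $k\equiv 0\pmod{\#Q}$ would correspond to the coset $N$ itself being a single conjugacy class, which only happens when $N=\{1\}$, in which case $\#\C_q=1$ trivially. Chaining the equivalences yields the statement.

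I do not anticipate a real obstacle: the argument is essentially a string of equivalences, with the only subtlety being the careful handling of the trivial coset, which is a minor edge case subsumed by Lemma \ref{lemmain}(3).
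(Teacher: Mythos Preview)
Your proposal is correct and follows essentially the same route as the paper: pick a generator $q$, use Corollary~\ref{cor1} to identify $\#\C_q$ with the number of extending characters of $N$, and use Lemma~\ref{lemmain}(3) to reduce ``some coset is a single class'' to ``$qN$ is a single class''. Your separate treatment of the coset $N$ is harmless but unnecessary, since $N=q^{\#Q}N$ with $\#Q\ge 1$ is already covered by Lemma~\ref{lemmain}(3).
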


\begin{proof}
Let $q\in G$ generate $G/N$. Then $G$ has a conjugacy class of size $\# N$ if and only if $qN$ is such a class,
by Lemma \ref{lemmain} (3). Equivalently, by Corollary \ref{cor1}, only $\triv_N$ extends to a character of $G$.
\end{proof}

\begin{example}
For $p>2$ there are exactly $p$ irreducible representations of $N=\SL_2(\F_p)$ that extend to $G=\GL_2(\F_p)$. 
Indeed, fix a generator $qN\in G/N\iso \F_p^\times$, a primitive root mod $p$. 
Note that if the determinant of a matrix in $\GL_2(\F_p)$ generates $\F_p^\times$, then the element is automatically semisimple, provided $p>2$.
Thus there are $p$ conjugacy classes in the coset $qN$, characterised by their trace, and hence, by Corollary~\ref{cor1}, exactly $p$ irreducible representations that extend to $\GL_2(\F_p)$.
\end{example}

We end with an `inversion formula', which reconstructs a character $\Theta$ on $G$ from its values
on a sufficient number of cosets. This was our original motivation in the number-theoretic setting
when $G$ is a local Galois group and $N\normal G$ its inertia subgroup. In that case,
this formula explictly reconstructs a representation of $G$ from
characteristic polynomials of Frobenius over sufficiently many intermediate fields. We refer the reader to 
\cite{weil2} 
%
for the applications of the formula.

\begin{corollary}[Inversion formula]
Suppose $N\normal G$ with $Q=G/N$ cyclic, generated by $q\in G$.
Let $U$ be a set of representatives of orbits of $\QDh$ on $\Irr G$, and 
denote $m_\rho=\langle \Res_N\rho, \Res_N\rho\rangle$ for $\rho\in U$.
Let $\Theta$ be a character of $G$.
\begin{itemize} 
\item[(i)]
$\Theta$ can be written as 
$$
  \Theta=\sum_{\rho\in U} \Psi_\rho\tensor\rho \qquad\text{for some character}\>\>\Psi_\rho\>\text{of $G$ with}\>\> N\subset\ker\Psi_\rho.
$$
\item[(ii)]
The eigenvalues of the matrix associated to $\Psi_\rho(q)$ are well-defined up to multiplication by
$m_\rho$th roots of~1.
\item[(iii)]
For every $\rho\in U$ and every $d\ge 0$,
$$
 \Psi_\rho(q^{d m_\rho})=\frac{1}{\# N\!\cdot\! m_\rho} \sum_{g\in q^{d m_\rho} N}  \overline{\rho(g)}\,\Theta(g).
$$
\item[(iv)] 
$\Psi_\rho\tensor\rho$ is uniquely determined by {\rm (iii)}. Concretely, suppose
\hbox{$\dim\Psi_\rho\!\le\! B$}. There is a unique $0\le n\le B$ and $\lambda_1,...,\lambda_n\in{\mathbb C}^\times$ 
such that 
$\sum_k\lambda_k^d=\Psi_\rho(q^{dm_\rho})$ for $d=1,...,B$.
Then 
$\Psi_\rho(q)=\sqrt[m_\rho]{\lambda_1}+...+\sqrt[m_\rho]{\lambda_n}$
for some choice of the roots. 
%
The character $\Psi_\rho\tensor\rho$ is independent of this choice.
\end{itemize}
\end{corollary}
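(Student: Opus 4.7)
My plan is to prove the four parts in order, using the idempotents $\pi_q$ from the proof of Theorem \ref{main} as the main tool; the key computation is part (iii). For (i), I would expand $\Theta=\sum_{\sigma\in\Irr G}a_\sigma\sigma$ as a non-negative integer combination of irreducibles and regroup by $\QDh$-orbits: for each $\rho\in U$, fix a transversal $T_\rho$ of $\QDh/\Stab(\rho)$ so that $[\rho]=\{\chi\tensor\rho\mid\chi\in T_\rho\}$, and set $\Psi_\rho=\sum_{\chi\in T_\rho}a_{\chi\rho}\chi$. This $\Psi_\rho$ is a genuine character of $Q$ (lifted to $G$ through $G\twoheadrightarrow Q$), so $N\subset\ker\Psi_\rho$, and $\Theta=\sum_{\rho\in U}\Psi_\rho\tensor\rho$ by construction. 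For (ii), the only freedom is the choice of transversal: replacing $\chi$ by $\chi\chi_1$ with $\chi_1\in\Stab(\rho)$ multiplies the corresponding eigenvalue of the matrix for $\Psi_\rho(q)$ by $\chi_1(q)$. Since $Q=\langle q\rangle$ is cyclic and $\Stab(\rho)\le\QDh$ is the unique subgroup of order $m_\rho$, the set $\{\chi_1(q):\chi_1\in\Stab(\rho)\}$ is exactly the group of $m_\rho$-th roots of unity.

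Part (iii) is the heart of the argument. I would substitute the decomposition from (i) into the right-hand side and use that each $\Psi_\sigma$ is constant on $N$-cosets:
$$
  \sum_{g\in q^{dm_\rho}N}\overline{\rho(g)}\,\Theta(g)
   =\sum_{\sigma\in U}\Psi_\sigma(q^{dm_\rho})\sum_{g\in q^{dm_\rho}N}\overline{\rho(g)}\sigma(g).
$$
Step (i) of the proof of Theorem \ref{main} identifies the inner sum with $\#G\cdot\langle\pi_{q^{dm_\rho}}\tensor\sigma,\rho\rangle$. By formula $(\dagger)$ from step (ii) there, $\pi_{q^{dm_\rho}}\tensor\sigma$ is either zero or a linear combination of characters in $[\sigma]$, so its pairing with $\rho\in U$ vanishes whenever $\sigma\ne\rho$. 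For $\sigma=\rho$, cyclicity of $Q$ gives $\cap_{\chi\in\Stab(\rho)}\ker\chi=\langle q^{m_\rho}\rangle\ni q^{dm_\rho}$, so $(\dagger)$ applies and the inner sum collapses to the term with $\chi_2$ trivial, giving the inner product $1/\#[\rho]=m_\rho/\#Q$. Multiplying through by the constants yields the stated formula.

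Part (iv) then becomes a standard recovery from power sums: (iii) provides $p_d=\Psi_\rho(q^{dm_\rho})=\sum_k\lambda_k^d$, the power sums of the eigenvalues of $\Psi_\rho(q^{m_\rho})=\Psi_\rho(q)^{m_\rho}$. Knowing the $p_d$ for $d=1,\dots,N$ suffices to reconstruct the multi-set $\{\lambda_k\}$ uniquely via Newton's identities (pass to elementary symmetric functions and factor the characteristic polynomial). Taking $m_\rho$-th roots then recovers the eigenvalues of $\Psi_\rho(q)$ up to exactly the ambiguity of (ii); by (ii) the product $\Psi_\rho\tensor\rho$ is independent of the choice. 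The main obstacle is (iii), where the correct framing of the inner sum as an inner product is what makes $(\dagger)$ collapse everything cleanly; the other parts are algebraic bookkeeping combined with a standard power-sum inversion.
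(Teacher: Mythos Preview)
Your proposal is correct. Parts (i), (ii), and (iv) match the paper's argument in substance (the paper simply says ``Clear'' for (i), gives the same stabiliser reasoning for (ii), and invokes the uniqueness of power-sum recovery for (iv), phrased via a Vandermonde system rather than Newton's identities).

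For (iii) you take a genuinely different route. The paper treats (iii) as a direct corollary of Theorem~\ref{main}: it writes the relation $\Theta(g)=\sum_{\rho}\Psi_\rho(q^{j})\rho(g)$ for $g\in C_{q^{j}}$ as a matrix equation involving $M_{q^{j}}$, then inverts using the unitarity established in the theorem and simplifies with $m_\rho\,\#[\rho]=(G:N)$. You instead bypass the matrix entirely and go back to the class functions $\pi_{q}$ and formula~$(\dagger)$ from inside the proof of Theorem~\ref{main}, recognising the inner sum as $\#G\cdot\langle\pi_{q^{dm_\rho}}\!\otimes\sigma,\rho\rangle$ and using orbit disjointness to kill the cross terms. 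Your approach is slightly more self-contained and avoids setting up the square matrix, at the cost of re-deriving the orthogonality that the unitarity of $M_{q^{j}}$ already packages; the paper's approach better highlights that (iii) really is a consequence of the main theorem. Both compute the same inner product $1/\#[\rho]=m_\rho/\#Q$ in the end.
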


\begin{proof}
(i) Clear.
(ii) As the decomposition into irreducibles is unique, the terms $\Psi_\rho\tensor\rho$ are 
uniquely determined by $\Theta$. It remains to show that for 
$\psi,\psi'\in \QD$,
$$
  \rho\tensor\psi\iso \rho\tensor\psi' \quad \liff \quad  (\psi/\psi')^{m_\rho}=\triv.
$$
By Lemma \ref{lemmain} (1), we have $m_\rho=\#\Stab_{\QD}(\rho)$.
Because $\QD$ is cyclic, $\Stab_{\QD}(\rho)$ is exactly the subgroup of characters of
order $m_\rho$, as required.

(iii)
Fix $\rho\in U$ and a multiple $j=dm_\rho$. 
Let $U_{q^j}\subset U$ be the subset of characters that are not identically 0 on $q^j N$.
Define the matrix $M_{q^j}$ as in the theorem 
with these representatives for $R_{q^j}$.
For $g\in \C_{q^j}$, we have
$$ 
  \Theta(g)
    = \sum_{\rho\in U} \Psi_\rho(g)\rho(g)
    = \sum_{\rho\in U} \Psi_\rho(q^j) \rho(g)
    = \sum_{\rho\in U_{q^j}} \Psi_\rho(q^j) \rho(g)
    = \sqrt{\tfrac{\#G}{\#[g]}} \sum_{\rho\in U_{q^j}}  
      M_{q^j,g,\rho} \,
      \frac{\Psi_\rho(q^j)}{\sqrt{\#[\rho]}}.
$$
In other words, we have a matrix equation
$$
  \Bigl (\sqrt{\tfrac{\#[g]}{\#G}}\, \Theta(g) \Bigr)_{g\in \C_{q^j}} = 
    M_{q^j} \cdot \Bigl( \frac{\Psi_\rho(q^j)}{\sqrt{\#[\rho]}} \Bigr)_{\rho\in U_{q^j}}.
$$
As $M_{q^j}$ is unitary, we can rewrite it as 
$$
  \Bigl( \frac{\Psi_\rho(q^j)}{\sqrt{\#[\rho]}} \Bigr)_{\rho\in U_{q^j}} = 
  \overline{M_{q^j}^t} \cdot \Bigl (\sqrt{\tfrac{\#[g]}{\#G}}\, \Theta(g) \Bigr)_{g\in \C_{q^j}}.
$$
Using the fact that $m_\rho=\#\Stab_{\QD}(\rho)$ proved in (ii), 
and the orbit-stabiliser equality $m_\rho [\rho] = (G:N)$, we get
$$
  \Psi_\rho(q^j) = \sqrt{\#[\rho]} \sum_{g\in \C_{q^j}} \overline{M_{q^j,g,\rho}} \sqrt{\tfrac{\#[g]}{\#G}}\, \Theta(g)
    = \sum_{g\in \C_{q^j}} \tfrac{\#[\rho]\#[g]}{\#G}  \overline{\rho(g)} \Theta(g)
$$
$$
\qquad \quad 
    = \sum_{g\in \C_{q^j}} \frac{\#G}{\#N m_\rho} \frac{\#[g]}{\#G}  \overline{\rho(g)} \Theta(g)
    = \frac{1}{\#N m_\rho} \sum_{g\in q^j N} \overline{\rho(g)} \Theta(g),
$$
as claimed.

(iv) Let $\mu_1,...,\mu_n$ be the eigenvalues of the matrix associated to $\Psi_\rho(q)$, 
so that $\Psi_\rho(q^{dm_\rho})=\sum_k\mu_k^{d m_\rho}$ for any $d\ge 0$. 
Generally, the Vandermonde system of equations $\sum_{k=1}^B\nu_k^d=a_d$ for $d=1,...,B$ 
has a unique (unordered) solution $\nu_1,...,\nu_B$.
Thus, the unique solution to $\sum_k\lambda_k^d=\Psi_\rho(q^{dm_\rho})$ is
$\mu_1^{m_\rho},...,\mu_n^{m_\rho},0,...,0$. 
The formula for $\Psi(\rho)$ follows, and independence of the choice is proved in (ii).
\end{proof}

\subsection*{Acknowledgements}
We would like to thank the Warwick Mathematics Institute and King's College London, where parts 
of this research were carried out, Geoffrey Robinson for helpful discussions, and the referee for helpful comments.
This research was partially is supported by EPSRC grants EP/M016838/1 and EP/M016846/1 
`Arithmetic of hyperelliptic curves'. The second author was supported by a Royal Society 
University Research Fellowship.


\end{document}